\documentclass[12pt]{amsart}

\usepackage[utf8]{inputenc}
\usepackage[T1]{fontenc}	
\usepackage[french,english]{babel}	

\usepackage{lmodern}			
\usepackage{newtxtext}

\usepackage[top=3cm, bottom=3cm, left=3.6cm, right=3.6cm]{geometry}
\usepackage{graphicx}	

\usepackage[a4paper,plainpages=false,colorlinks,linkcolor=bleuFonce,citecolor=rougeFonce,urlcolor=vertFonce,breaklinks]{hyperref}

\usepackage{placeins} 
\usepackage{float} 

\usepackage{tikz}
\usetikzlibrary{patterns}

\usepackage[normalem]{ulem}


\usepackage{xcolor}
\definecolor{vertFonce}	{rgb}{0,0.5,0}
\definecolor{numLignes}	{rgb}{0.17,0.57,0.7}	
\definecolor{gris}		{rgb}{0.5,0.5,0.5}
\definecolor{grisFonce}	{rgb}{0.2,0.2,0.2}
\definecolor{orange}	{rgb}{1,0.65,0.31}		
\definecolor{orangeFonce}{rgb}{1,0.4,0}
\definecolor{bleuFonce}	{rgb}{0,0,0.4}
\definecolor{rougeFonce}{rgb}{0.3,0,0}
\definecolor{rougeWord}	{rgb}{0.5,0,0}
\definecolor{vertClair}	{rgb}{0.8,1,0.8}
\definecolor{rougeClair}{rgb}{1,0.5,0.5}
\definecolor{violet}	{rgb}{0.5,0,0.5}


\usepackage{pict2e}
\setlength{\unitlength}{4pt}

\usepackage{multido}


\usepackage{amsfonts,amssymb,amsthm,amsmath}
\usepackage{amsaddr}	
\usepackage{dsfont}				
\usepackage{mathrsfs}
\usepackage{yfonts}
\usepackage[mathscr]{euscript}
\usepackage{cancel}
\usepackage{enumerate}

\usepackage{mathtools} 
 

\newtheorem{theorem}{Theorem}[section]

\newtheorem{prop}{Proposition}[section]

\newtheorem{remark}{Remark}[section]

\newenvironment{system}{%
	\equation\left\{\ \begin{aligned}%
}{%
	\end{aligned} \right. \endequation%
}
\newenvironment{system*}{%
	\equation\nonumber\left\{\ \begin{aligned}
}{%
	\end{aligned} \right. \endequation%
}




%
\usepackage{stmaryrd}
\SetSymbolFont{stmry}{bold}{U}{stmry}{m}{n}
\newcommand		{\subsetArrow}	{\mathrel{\ooalign{$\subset$\cr%
\hidewidth\raise-.087ex\hbox{$_\shortrightarrow\mkern-1.5mu$}\cr}}}
\newcommand		{\subsetarrow}	{\mathrel{\ooalign{$\subset$\cr%
\hidewidth\raise-1.45ex\hbox{$\vec{}\mkern6mu$}\cr}}}




\newcommand		{\N}		{\mathbb N}			
\newcommand		{\RR}		{\mathbb R}			
\newcommand		{\R}		{\RR}

\newcommand		{\Rd}		{\R^3}
\newcommand		{\Rdd}		{\R^{6}}

\newcommand		{\cP}		{\mathcal P}		
\renewcommand	{\L}		{\mathcal L}		
\renewcommand	{\P}		{\mathscr P}		






\newcommand		\sfm 		{\mathsf m}


\newcommand		{\lt}			{\left}				%
\newcommand		{\rt}			{\right}			%
\renewcommand	{\(}			{\lt(}
\renewcommand	{\)}			{\rt)}
\newcommand		{\bangle}[1]	{\lt\langle #1\rt\rangle}
\newcommand		{\weight}[1]	{\bangle{#1}}	

\newcommand		{\com}[1]		{\lt[{#1}\rt]}		


\newcommand		{\n}[1]			{\lt\lvert #1 \rt\rvert}
\newcommand		{\norm}[1]		{\lVert #1 \rVert}		
\newcommand		{\nrm}[1]		{\lt\lVert #1\rt\rVert}

\newcommand		{\Bnrm}[1]		{\Big\lVert #1\Big\rVert}
\newcommand		{\Nrm}[2]		{\nrm{#1}_{#2}}

\newcommand		{\BNrm}[2]		{\Bnrm{#1}_{#2}}


\renewcommand		{\d}		{\mathop{}\!\mathrm{d}}		

\newcommand			{\dpt}		{\partial_t}

\newcommand			{\dt}		{\frac{\d}{\d t}}	

\newcommand			{\Dx}		{\nabla_x}
\newcommand			{\Dv}		{\nabla_\xi}

\DeclareMathOperator{\sign}		{sign}				
\DeclareMathOperator{\tr}		{Tr}				
\DeclareMathOperator{\diag}		{diag}

\newcommand		{\Sign}[1]		{\sign\!\( #1 \)}	
\newcommand		{\Tr}[1]		{\tr\!\( #1 \)}		
\newcommand		{\Diag}[1]		{\diag\!\( #1 \)}

\newcommand		{\intd}			{\int_{\Rd}}
\newcommand		{\intdd}		{\int_{\Rdd}}
\newcommand		{\iintd}		{\iint_{\Rdd}}
\newcommand		{\iintdd}		{\iint_{\Rdd\times\Rdd}}


\newcommand		{\init}			{\mathrm{in}}

\newcommand		{\eps}			{\varepsilon}
\newcommand		{\Eps}			{\mathcal{E}}
\newcommand		{\cC}			{\mathcal{C}}

\usepackage{braket}

\newcommand		{\op}		{\boldsymbol{\rho}}	
\newcommand		{\opgam}	{\boldsymbol{\gamma}}	





\newcommand		{\Wh}		{W_{2,\hbar}}		
\newcommand		{\weyl}		{\op_\hbar^W}
\newcommand		{\wick}		{\widetilde{\op}_\hbar^W}
\newcommand		{\Weyl}[1]	{\weyl\!\!\(#1\)}
\newcommand		{\Wick}[1]	{\wick\!\!\(#1\)}

\newcommand		{\opc}		{\boldsymbol{c}}

\newcommand		{\opp}		{\boldsymbol{p}}


\newcommand		{\Dh}		{\boldsymbol{\nabla}}	
\newcommand		{\Dhx}[1]	{\Dh_{\!x} #1}			
\newcommand		{\Dhv}[1]	{\Dh_{\!\xi} #1}		




\title[\textsc{Uniqueness criteria and semiclassical analysis}]{\Large Uniqueness criteria for the Vlasov--Poisson system \\and applications to semiclassical analysis}

\author[\textsc{L.~Lafleche}]{\large\textsc{Laurent Lafleche}}
\address[L.~Lafleche]{Institut Camille Jordan, UMR 5208 CNRS \\\& Université Claude Bernard Lyon 1, France}
\email{lafleche@math.univ-lyon1.fr}

\author[\textsc{C.~Saffirio}]{\large\textsc{Chiara Saffirio}}
\address[C.~Saffirio]{Department of Mathematics and Computer Science,\\ University of Basel, 4051 Basel, Switzerland}
\email{chiara.saffirio@unibas.ch}

\subjclass[2010]{81Q20, 35Q55, 35Q83 (82C10, 82C05).}
\keywords{semiclassical limit, Hartree equation, Vlasov equation, Coulomb potential, gravitational potential.}


\begin{document}

\begin{abstract}
	We review some uniqueness criteria for the Vlasov--Poisson system, emerging as corollaries of stability estimates in strong or weak topologies, and show how they serve as a guideline to solve problems arising in semiclassical analysis. Different topologies allow to treat different classes of quantum states.
\end{abstract}

\begingroup
\def\uppercasenonmath#1{} 
\let\MakeUppercase\relax 
\maketitle
\thispagestyle{empty} 
\endgroup



\section{Introduction}

	We consider the Vlasov--Poisson equation 
	\begin{equation}\label{eq:VP}
		\dpt f + \xi\cdot\Dx f+E_f\cdot\nabla_\xi f=0,
	\end{equation}
	where $f=f(t,x,\xi)$ is a real-valued non-negative time-dependent function defined on the phase space $\Rdd$, $E_f(t,x)=(\nabla K*\rho_f)(t,x)$ is a self-induced force field generated by the spatial density $\rho_f(t,x) = \int f(t,x,\xi)\d\xi$. To simplify the presentation, in the rest of the paper we will focus on $d=3$ and 
	\begin{equation*}
		K(x)=\frac{\pm1}{\n{x}}.
	\end{equation*}
	In this setting, the well-posedness theory for the Cauchy problem associated with Equation~\eqref{eq:VP} has been established by Pfaffelmoser~\cite{pfaffelmoser_global_1992} and Lions and Perthame~\cite{lions_propagation_1991}, respectively proving global existence and uniqueness of classical solutions with compactly supported initial data, and existence of global weak solutions assuming that the initial data $f^\init$ are such that $f^\init\in L^1(\Rdd)\cap L^\infty(\Rdd)$ and $\int \n{\xi}^m f^\init\d x\d\xi$ is finite for some $m > 3$. A sufficient condition for uniqueness of measure-valued solutions with spatial density satisfying $\rho_f\in L^\infty([0,T],L^\infty(\Rd))$ was proven by Loeper in~\cite{loeper_uniqueness_2006}, and more recently generalized by Miot~\cite{miot_uniqueness_2016} including spatial densities such that $\sup_{p\geq 1} \frac{1}{p}\Nrm{\rho_f}{L^p}$ is finite, uniformly in time, and by Holding and Miot \cite{holding_uniqueness_2018} considering spatial densities in some Orlicz space. 
	
	The above mentioned uniqueness criteria deal with assumptions at positive time on the spatial density $\rho_f$. In this paper we collect three stability estimates for the Vlasov--Poisson equation leading to uniqueness criteria (non optimal in terms of conditions on $\rho_f$) in {\it weak-strong} sense. By weak-strong stability we indicate that some regularity conditions are assumed only on one of the two solutions involved in the stability estimate, thus allowing the other solution to satisfy only the minimal existence requirements.  We will show how such a weak-strong form in our estimates (cf.~Proposition~\ref{prop:weak-strong-L1} and Proposition~\ref{prop:weak-strong-L2}) can be used to deduce results in the context of the semiclassical limit from the Hartree equation (Equation~\eqref{eq:Hartree} below) towards the Vlasov--Poisson system~\eqref{eq:VP}. We will also look at the weak topology induced by the 2-Wasserstein distance (cf.~Section~\ref{subsec:opt-transport_classical}) and show how one can get a semiclassical estimate as an application of a stability result \`a la Loeper.  The weak topology allows to treat a class of quantum states that were not included in the analysis with the weak-strong method and that are relevant at zero temperature (so-called pure states), however it does not provide a weak-strong estimate. More precisely, we will review the results in \cite{lafleche_strong_2021, chong_l2_2022,lafleche_propagation_2019} and \cite{lafleche_propagation_2019,lafleche_global_2021}, giving a unified picture of the methods. 
			
	\subsection{Notations}For functions on the phase space of the form $f=f(x,\xi)$, we use the shorthand notation $L^p_xL^{q,r}_\xi = L^p(\Rd,L^{q,r}(\Rd))$, where $L^p(\Rd)$ and $L^{q,r}(\Rd)$ denote respectively the Lebesgue and Lorentz spaces. We also use the notation $\P = \P(\Rdd)$ for the set of probability measures on $\Rdd$. Let $\alpha\in\N^6$ be a multi-index with $\n{\alpha} = \sum_{i=1}^6\alpha_i$, and $\partial^{\alpha}=\partial^{\alpha_2}_{x_1}\partial_{\xi_1}^{\alpha_2}\dots\partial_{x_3}^{\alpha_5}\partial_{\xi_3}^{\alpha_6}$, then the weighted Sobolev norm in the Sobolev space $W^{k,p}_n$ can be defined by the formula
	\begin{equation*}
		\Nrm{f}{W^{k,p}_n} = \Big(\sum_{\n{\alpha}\leq k}\Nrm{\weight{\xi}^n\partial^\alpha f}{L^p}^2\Big)^\frac{1}{2}
	\end{equation*}
	with $\weight{z} = \sqrt{1+\n{z}^2}$, with the usual associated notations $H_n^k = W_n^{k,2}$ and $L^p_n = W^{0,p}_n$.
	
	Using the correspondence of classical and quantum mechanics and using the symbol $\{\cdot,\cdot\}$ and $\com{\cdot,\cdot}$ to denote respectively the Poisson brackets and the commutator, we recall that the quantum analogue of 
	\begin{equation*}
		\Dx f=\lt\{-\xi,f\rt\} \qquad \text{and} \qquad \Dv f = \lt\{x,f\rt\}
	\end{equation*}
	are respectively
	\begin{equation*}
		\quad\Dhx\op := \com{\nabla,\op}\ \qquad \text{and} \qquad \Dhv\op := \com{\frac{x}{i\hbar},\op},
	\end{equation*}
	where $\hbar=h/(2\pi)$ is the reduced Plank constant. \\
	The quantum analogue of Lebesgue norms and weighted Sobolev norms of $\op$, a bounded operator acting on $L^2(\Rd)$, are the quantum Lebesgue and Sobolev norms
	\begin{align*}
		\Nrm{\op}{\L^p} &= h^\frac{3}{p} \Tr{\n{\op}^p}^{\frac{1}{p}}
		\\
		\Nrm{\op}{\mathcal{W}^{k,p}_n} &= \Nrm{\sfm\op}{\L^p}+\Nrm{\sfm\Dhx\op}{\L^p}+\Nrm{\sfm\Dhv\op}{\L^p}
	\end{align*} 
	with $\n{\op} = \sqrt{\op^*\op}$ and $\sfm:=1+\n{\opp}^n$, with $\opp=-i\hbar\nabla$. When $p=\infty$, $\Nrm{\op}{\L^\infty}$ denotes the operator norm.

	To connect functions on the phase space and operators acting on $L^2(\Rd)$, we define the Weyl quantization as follows: to every $f\in L^1(\Rdd)$, the Weyl quantization of $f$ is the operator $\Weyl{f}$ with kernel
	\begin{equation*}
		\Weyl{f}(x,y) = \intd e^{-2i\pi (y-x)\cdot\xi}\, f\!\(\frac{x+y}{2},h\xi\)\d\xi.
	\end{equation*}
	Observe that, if $f$ is a probability density on the phase space (hence nonnegative), its Weyl quantization is not necessarily a non-negative operator. For this reason it is convenient to introduce an alternative quantization of the function $f$, called the T\"oplitz quantization, or the Wick quantization, or the Husimi quantization, given by
	\begin{equation*}
		\Wick{f} = \frac{1}{h^3}\intdd f(z) \ket{\psi_z}\bra{\psi_z}\d z, 
	\end{equation*}
	where 
	\begin{equation*}
		\psi_z(y) := \(\pi \hbar\)^{-3/4} e^{-\n{y-x}^2/(2\hbar)}\,e^{i y\cdot \xi/\hbar}.
	\end{equation*}
	It follows from this formula that $\Wick{f}$ is a positive operator whenever $f\ge 0$.
	
\noindent One can also define the inverse transformation 	of the Weyl quantization. Namely, let $\op$ be an operator with a regular kernel $\op(x,y)$, then
	\begin{equation*}
	f_{\op}(x,\xi)=\int_{\R^3} e^{-2\pi\,i\,\xi\cdot y/h}\op\(x+\frac{y}{2},x-\frac{y}{2}\)\,\d y
	\end{equation*}
	is the Wigner transform of the operator $\op$.
	
	If not otherwise stated, we will denote by $C$ a positive constant independent of $\hbar$ and will use that, for $a,b\geq 0$, $a \lesssim b$ if and only if there exists $C>0$ such that $a\leq C\,b$.

\section{\texorpdfstring{$L^1$}{L1} weak-strong uniqueness criterion}

	\noindent In this section we present a $L^1$ weak-strong stability estimate for the Vlasov--Poisson equation, where regularity is required only on one of the two solutions of~\eqref{eq:VP} and show an application of this criterion to the semiclassical limit from the Hartree equation to the Vlasov--Poisson system.
	
	\subsection{\texorpdfstring{$L^1$}{L1} weak-strong stability criterion for the Vlasov--Poisson equation}
		
	\begin{prop}\label{prop:weak-strong-L1}
		Let $f_1$ and $f_2$ be two positive solutions of the Vlasov--Poisson equation~\eqref{eq:VP} verifying  $f_1,\,f_2\in L^\infty([0,T],L^1(\Rdd))$, for some $T>0$. Then, if
		\begin{equation}\label{eq:regularity-f2}
			\lambda_{f_2}(t) := \Nrm{\nabla_\xi f_2}{L^{3,1}_xL^1_{\xi}}
		\end{equation}
		is finite for every $t\in [0,T]$, the following stability estimate holds
		\begin{align*}
			\Nrm{f_1-f_2}{L^1(\Rdd)} \leq \Nrm{f_1^\init-f_2^\init}{L^1(\Rdd)} e^{\Lambda_{f_2}(t)},
		\end{align*}
		where  $\Lambda_{f_2}(t) = C \int_0^t \lambda_{f_2}(s)\d s$ with $C = \Nrm{\nabla K}{L^{\frac{3}{2},\infty}}$.
	\end{prop}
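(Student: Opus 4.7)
\medskip

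\noindent\textbf{Proof plan.} The natural object to study is the difference $g := f_1 - f_2$, which after subtracting the two Vlasov--Poisson equations satisfies the transport equation
\begin{equation*}
    \dpt g + \xi\cdot\Dx g + E_{f_1}\cdot\Dv g = -(E_{f_1}-E_{f_2})\cdot\Dv f_2.
\end{equation*}
The plan is to derive a Grönwall inequality for $\Nrm{g(t)}{L^1(\Rdd)}$. Since the transport vector field $(\xi, E_{f_1}(t,x))$ is divergence-free on $\Rdd$ (the field $E_{f_1}$ is independent of $\xi$, and $\xi$ is independent of $x$), I would formally multiply the equation by $\sign(g)$ and integrate to kill the transport terms. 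To make this rigorous, I would replace $\n{g}$ by $\sqrt{g^2+\eps^2}-\eps$, use the chain rule (valid since $g$ is a difference of weak $L^\infty_t L^1$ solutions transported by a field that one can mollify), and pass to the limit $\eps\to 0$; alternatively one may invoke DiPerna--Lions renormalization, which applies here since $E_{f_1}$ has sufficient spatial regularity coming from $\rho_{f_1}\in L^\infty_t L^1$. The outcome is the a priori inequality
\begin{equation*}
    \dt\Nrm{g(t)}{L^1(\Rdd)} \leq \iintd \n{E_{f_1}-E_{f_2}}(t,x)\,\n{\Dv f_2}(t,x,\xi)\d x\d\xi.
\end{equation*}

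\medskip

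\noindent The second step is to estimate the right-hand side in terms of $\Nrm{g(t)}{L^1}$ and $\lambda_{f_2}(t)$. Setting $G(t,x) := \int_{\Rd}\n{\Dv f_2(t,x,\xi)}\d\xi$, I would apply Hölder's inequality in the Lorentz scale in the $x$-variable, using the duality $(L^{3/2,\infty})' = L^{3,1}$:
\begin{equation*}
    \iintd\n{E_{f_1}-E_{f_2}}\,\n{\Dv f_2}\d x\d\xi \leq \Nrm{E_{f_1}-E_{f_2}}{L^{3/2,\infty}_x}\,\Nrm{G(t,\cdot)}{L^{3,1}_x},
\end{equation*}
where by definition $\Nrm{G(t,\cdot)}{L^{3,1}_x}\leq \Nrm{\Dv f_2}{L^{3,1}_x L^1_\xi} = \lambda_{f_2}(t)$ after exchanging the order of the norm and the $\xi$-integration (which is the bound one really needs, and matches exactly the hypothesis~\eqref{eq:regularity-f2}). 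To control the field, I would use the weak Young inequality $L^{3/2,\infty} * L^1 \hookrightarrow L^{3/2,\infty}$ applied to $E_{f_1}-E_{f_2} = \nabla K*(\rho_{f_1}-\rho_{f_2})$, which yields
\begin{equation*}
    \Nrm{E_{f_1}-E_{f_2}}{L^{3/2,\infty}_x}\leq \Nrm{\nabla K}{L^{3/2,\infty}}\,\Nrm{\rho_{f_1}-\rho_{f_2}}{L^1_x} \leq C\,\Nrm{g(t)}{L^1(\Rdd)},
\end{equation*}
since $\Nrm{\rho_{f_1}-\rho_{f_2}}{L^1_x}\leq\Nrm{g}{L^1(\Rdd)}$.

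\medskip

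\noindent Combining these two steps gives $\dt\Nrm{g}{L^1}\leq C\,\lambda_{f_2}(t)\,\Nrm{g}{L^1}$ with $C=\Nrm{\nabla K}{L^{3/2,\infty}}$, and Grönwall's lemma then yields the claimed stability estimate with $\Lambda_{f_2}(t)=C\int_0^t\lambda_{f_2}(s)\d s$. The main technical obstacle is the rigorous justification of the $L^1$ chain rule for the difference of two only weakly regular solutions; one expects this to be fine because the drift field has enough $L^p$ regularity to fall within the DiPerna--Lions framework, but writing it cleanly is the delicate point, whereas the core estimate via Lorentz duality is direct.
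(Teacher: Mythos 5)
Your proof is correct and follows essentially the same route as the paper's: the same equation for the difference $g=f_1-f_2$, the same (formal) multiplication by $\sign(g)$ using that $(\xi,E_{f_1})$ is divergence-free, and the same Lorentz-space H\"older duality between $\nabla K\in L^{3/2,\infty}$ and $\Dv f_2\in L^{3,1}_xL^1_\xi$, followed by Gr\"onwall. The only cosmetic difference is in how you associate the trilinear term: you place $E_{f_1}-E_{f_2}=\nabla K*(\rho_{f_1}-\rho_{f_2})$ in $L^{3/2,\infty}_x$ via weak Young and pair it with $G\in L^{3,1}_x$ by duality, whereas the paper keeps $\rho_{f_1}-\rho_{f_2}$ in $L^1_x$ and bounds the convolution $\nabla K * G$ in $L^\infty_x$ by the same Lorentz H\"older inequality; the two orderings of Fubini are equivalent, and the paper is no more rigorous than you are about justifying the $L^1$ chain rule.
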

	
	\begin{remark}
		Notice that the regularity condition~\eqref{eq:regularity-f2} on $f_2$ reads
		\begin{equation*}
			\intd\n{\nabla_\xi f_2}\d \xi \in L^{3,1}_x.
		\end{equation*}
		In particular, for each $t\in [0,T]$ and for some $\eps\in(0,2]$, it holds by real interpolation
		\begin{equation}\label{eq:regularity}
			\Nrm{\intd\n{\nabla_\xi f_2}\d \xi}{L^{3,1}_x} \leq  \Nrm{\intd\n{\nabla_\xi f_2}\d \xi}{L^{3+\eps}_x} + \Nrm{\intd\n{\nabla_\xi f_2}\d \xi}{L^{3-\eps}_x}.
		\end{equation}
		Indeed it is known that solutions of Vlasov--Poisson are regular (see e.g.~\cite{horst_global_1987,lions_propagation_1991,pfaffelmoser_global_1992,schaeffer_global_1991}). This is one of the advantages of the weak-strong method, that requires regularity only on one of the solutions.
	\end{remark}

	\begin{proof}[Proof of Proposition~\ref{prop:weak-strong-L1}]
		Let $f:= f_1-f_2$ and define for $k=1,2$, $\rho_k = \intd f_k\d \xi$, $\rho := \rho_1-\rho_2$ and $E_k = -\nabla K * \rho_k$. Then it holds
		\begin{align*}
			\dpt f + \xi\cdot\Dx f + E_1\cdot\Dv f = (E_2-E_1)\cdot\Dv f_2,
		\end{align*}
		whence
		\begin{align*}
			\dpt \Nrm{f}{L^1} &= \iintd \(\nabla K * \rho \cdot \Dv f_2\) \Sign{f} \d x\d \xi
			\\
			&= -\intd \rho\, \nabla K\,\dot{*} \(\intd \Sign{f} \Dv f_2\d \xi\)
		\end{align*}
		where the notation $\dot{*}$ indicates that we perform the scalar product of vectors inside the convolution. This implies
		\begin{equation}\label{eq:classical-L1}
			\dpt \Nrm{f}{L^1} \leq \Nrm{f}{L^1} \Nrm{\nabla K * \intd \n{\Dv f_2}\d\xi}{L^\infty}.
		\end{equation}
		By H\"older's inequality for Lorentz spaces (cf.~\cite[Formula~(2.7)]{hunt_lpq_1966}), since $\nabla K\in L^{\frac{3}{2},\infty}$, we have
		\begin{equation*}
			\Nrm{\nabla K * \Nrm{\nabla_\xi{f_2}}{L^1_\xi}}{L^\infty} \leq \sup_{z\in\Rd} \int_{\Rd} \n{\nabla K(z-\cdot)\Nrm{\nabla_\xi{f_2}}{L^1_\xi}} \leq C \Nrm{\nabla_\xi{f_2}}{L_x^{3,1}L^1_\xi}
		\end{equation*}
		and Gr\"onwall's Lemma concludes the proof.
	\end{proof}

\subsection{Applications to the limit from Hartree to Vlasov--Poisson: convergence in trace norm}

	In this section we consider the time-dependent Hartree equation 
	\begin{equation}\label{eq:Hartree}
		i\,\hbar\,\dpt\op = \com{H_{\op},\op}
	\end{equation}
	describing the time evolution of $\op=\op(t)$, the nonnegative and bounded density operator on $L^2(\R^3)$ such that $\Nrm{\op}{\L^1}=1$. The Hamiltonian is given by the sum of the Laplace operator and the multiplication operator $V_{\op}=K*\diag{(\op)}$
	\begin{equation}\label{eq:Hamiltonian}
		H_{\op}=-\frac{\hbar^2}{2}\Delta+V_{\op},
	\end{equation}
	where $\hbar = h/(2\pi)$ is the reduced Planck constant and 
	\begin{equation*}
		\Diag{\op}(x) := h^3 \,\op(x,x).
	\end{equation*}
	We recall the following result from~\cite{lafleche_strong_2021}.
	
	\begin{theorem}\cite[Theorem~1.1]{lafleche_strong_2021}\label{thm:LS-20}
		Let $f$ be a nonnegative solution to the Vlasov--Poisson equation~\eqref{eq:VP} and $\op$ a solution to the Hartree equation~\eqref{eq:Hartree} with initial data
		\begin{align*}
			f^\init &\in W_m^{\sigma+1,\infty}(\Rdd)\cap H_\sigma^{\sigma+1}(\Rdd)
			\\
			\op^\init &\in\L^1
		\end{align*} 
		with $m>3$ and $\sigma> m+6$. Then there exist two nonnegative real-valued continuous functions $\Lambda_{f}(t)$ and $C_{f}(t)$ independent of $\hbar$ such that
		\begin{equation*}
			\Nrm{\op-\Weyl{f}}{\L^1} \leq \(\Nrm{\op^\init-\Weyl{f^\init}}{\L^1} + C_f(t)\,\hbar\) e^{\Lambda_{f}(t)}.
		\end{equation*}
	\end{theorem}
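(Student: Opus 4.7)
The plan is to mirror the classical weak-strong $L^1$ argument of Proposition~\ref{prop:weak-strong-L1} at the operator level, supplementing it with two specifically quantum ingredients: Duhamel with the unitary Hartree propagator (which preserves the trace norm), and Moyal calculus to compare $\Weyl{f}$ with an exact solution of Hartree. Setting $\eps := \op - \Weyl{f}$ and $V_\eps := K * \diag \eps$, I would first derive the evolution of $\eps$: Weyl-quantising the identity $\dpt f = -\{H_{\mathrm{cl}}, f\}_{\mathrm{PB}}$ and using that the Weyl symbol of a commutator is $i\hbar$ times the Moyal bracket gives
\begin{equation*}
i\hbar\,\dpt \Weyl{f} = \com{H_{\Weyl{f}}, \Weyl{f}} + M_f,
\end{equation*}
where $M_f$ is the Weyl quantization of the Moyal-minus-Poisson defect. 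Since the kinetic part of $H_{\mathrm{cl}}$ is quadratic in $\xi$, only $\{V_f, f\}_M - \{V_f, f\}_{\mathrm{PB}}$ contributes, whose first non-zero term is of order $\hbar^2$ and carries third derivatives of $V_f$ and $f$. Subtracting from the Hartree equation~\eqref{eq:Hartree} yields
\begin{equation*}
i\hbar\,\dpt \eps = \com{H_\op, \eps} + \com{V_\eps, \Weyl{f}} - M_f.
\end{equation*}

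Self-adjointness of $H_\op$ means conjugation by the Hartree propagator preserves $\L^1$, so Duhamel and the triangle inequality give
\begin{equation*}
\Nrm{\eps(t)}{\L^1} \leq \Nrm{\eps(0)}{\L^1} + \int_0^t \tfrac{1}{\hbar}\Nrm{\com{V_{\eps(s)}, \Weyl{f(s)}}}{\L^1}\d s + \int_0^t \tfrac{1}{\hbar}\Nrm{M_{f(s)}}{\L^1}\d s.
\end{equation*}
Using the regularity of $f$ propagated from $f^\init \in W^{\sigma+1,\infty}_m \cap H^{\sigma+1}_\sigma$, the Moyal remainder obeys $\Nrm{M_f(s)}{\L^1} \lesssim \hbar^3$, making the last term $O(\hbar^2)$. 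The heart of the argument is then the quantum analogue of~\eqref{eq:classical-L1}:
\begin{equation*}
\tfrac{1}{\hbar}\Nrm{\com{V_\eps, \Weyl{f}}}{\L^1} \leq C\,\lambda_f(t)\,\Nrm{\eps}{\L^1} + \hbar\,\tilde C_f(t),
\end{equation*}
with $\lambda_f(t) := \Nrm{\intd |\nabla_\xi f|\d\xi}{L^{3,1}_x}$. To establish it I would expand $\tfrac{1}{i\hbar}\com{V_\eps, \Weyl{f}} = -\Weyl{\nabla V_\eps \cdot \nabla_\xi f} + \hbar^2\,(\text{remainder})$ via Moyal, estimate the remainder against the regularity of $f$, and handle the leading term by duality: test against $\sign(\eps)$, use cyclicity of the trace together with $V_\eps = K*\diag\eps$ to shift the gradient onto $\nabla K$, and conclude with Hölder in Lorentz spaces, the bound $\Nrm{\diag \eps}{L^1} \leq \Nrm{\eps}{\L^1}$, and Young's inequality for the convolution with $\nabla K \in L^{3/2,\infty}$, exactly as in the classical proof.

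Combining these steps produces the differential inequality
\begin{equation*}
\dt \Nrm{\eps(t)}{\L^1} \leq C\,\lambda_f(t)\,\Nrm{\eps(t)}{\L^1} + \hbar\,\tilde C_f(t),
\end{equation*}
to which Grönwall's lemma delivers the theorem with $\Lambda_f(t) = C\int_0^t \lambda_f(s)\d s$ and an explicit $C_f$ built from $\tilde C_f$. The main obstacle is the commutator bound: the Coulomb singularity of $K$ makes the Moyal expansion of $\com{V_\eps, \Weyl{f}}$ involve second and higher derivatives of $K$, which are not locally integrable, so the singularity must be carefully traded against the regularity of $f$. This is precisely why the hypothesis $\sigma > m + 6$ with $m > 3$ appears, and why the borderline Lorentz norm $L^{3,1}_x$ on $\intd|\nabla_\xi f|\d\xi$ (rather than a softer $L^p_x$ for $p > 3$) is required.
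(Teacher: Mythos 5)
Your overall architecture matches the paper's: Duhamel along the Hartree propagator (which is unitary, hence preserves $\L^1$), a splitting into a ``dynamical discrepancy'' term (your $M_f$, the paper's operator $B$ with kernel~\eqref{eq:B-term}, which is exactly the Taylor/Moyal defect of the \emph{regular} potential $V_f=K*\rho_f$ and is indeed controllable by the propagated Sobolev regularity of $f$), and a main term $\frac1\hbar\com{V_\eps,\Weyl{f}}$ to be bounded by $\lambda_f(t)\Nrm{\eps}{\L^1}$. The gap is in how you treat that main term. You propose to Moyal-expand $\frac{1}{i\hbar}\com{V_\eps,\Weyl{f}} = -\Weyl{\nabla V_\eps\cdot\nabla_\xi f}+\hbar^2(\text{remainder})$ and to ``estimate the remainder against the regularity of $f$.'' This cannot work in the weak--strong setting: the Moyal remainder carries second and third derivatives of $V_\eps = K*(\rho-\rho_f)$, and the only control you have on $\rho-\rho_f=\diag(\eps)$ is its $L^1$ norm. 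Since $\nabla^2K$ and $\nabla^3K$ are not locally integrable, $\nabla^kV_\eps$ for $k\geq 2$ is not bounded by $\Nrm{\rho-\rho_f}{L^1}$ in any usable norm, and the regularity of $f$ alone cannot absorb this because the singular factor sits on the $\eps$ side of the product, not on the $f$ side. Your closing remark concedes the difficulty but does not resolve it, and resolving it is precisely the content of the theorem's hard step.

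The paper circumvents the Moyal expansion of the singular commutator entirely. It writes $\com{V_\eps,\Weyl{f}}=\intd(\rho-\rho_f)(x)\com{K(\cdot-x),\Weyl{f}}\d x$, pulls out $\Nrm{\rho-\rho_f}{L^1}\leq\Nrm{\eps}{\L^1}$, and invokes a uniform-in-$x$ trace-norm commutator inequality (Proposition~\ref{prop:commutator-inequality}): $\Nrm{\com{K(\cdot-x),\Weyl{f}}}{\L^1}\leq C\hbar(\Nrm{\diag(\n{\Dhv\Weyl{f}})}{L^{3-\eps}}+\Nrm{\diag(\n{\Dhv\Weyl{f}})}{L^{3+\eps}})$. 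This is the quantum analogue of the Lorentz--H\"older convolution bound $\Nrm{\nabla K*\Nrm{\nabla_\xi f}{L^1_\xi}}{L^\infty}\lesssim\Nrm{\nabla_\xi f}{L^{3,1}_xL^1_\xi}$ used in Proposition~\ref{prop:weak-strong-L1}, and it is a genuinely operator-theoretic estimate (only the gradient of $K$ effectively appears, all higher derivatives land on $\Weyl{f}$ through $\Dhv$), followed by an interpolation of $\diag(\n{\Dhv\Weyl{f}})$ between $\L^\infty$ (Calder\'on--Vaillancourt) and quantum velocity moments. Without this ingredient, or a substitute for it, your argument does not close.
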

	
	\begin{remark}
		Recalling that for $(y,w)\in\Rdd$
		\begin{equation*}
			\mathcal{F}[f_{\op}](y,w) = h^3\Tr{e^{-2i\pi\(y\cdot x+w\cdot \opp\)}\op}\,
		\end{equation*}
		where $x$ and $\opp$ are the position and momentum operator respectively and $\mathcal{F}[g]$ denotes the Fourier transform of the function $g$, we obtain 		
		\begin{equation*}
			\Nrm{\mathcal{F}[f_{\op}]-\mathcal{F}[f]}{L^\infty}\leq \Nrm{\op-\Weyl{f}}{\L^1}.
		\end{equation*}
		As a corollary of Theorem~\ref{thm:LS-20}, this gives convergence in $L^\infty$ for the Fourier transform of the difference between the Wigner transforms of the solution to the Hartree equation and the solution to the Vlasov--Poisson equation in the semiclassical limit, if the initial data are close for $\hbar\to 0$, providing an explicit rate of convergence, that is supposed to be  optimal in $\hbar$.	
		 This is a generalization in a stronger topology with explicit rate of the result obtained by Lions and Paul in~\cite{lions_sur_1993}. 
	\end{remark}
	
	The rest of this section is devoted to highlighting the analogy between the proof of Proposition~\ref{prop:weak-strong-L1} and the proof of Theorem~\ref{thm:LS-20}, aiming at conveying the idea that, given a weak-strong uniqueness stability estimate, there is a quite general method leading to the proof of the semiclassical limit in the correspondent topology for operators. 
	
	\subsubsection{Proof of Theorem~\ref{thm:LS-20} part I: reduction to stability estimates} Let $\Weyl{f}$ be the Weyl quantization of $f$ and $U_{t,s}$, $t>s$, be the two parameter semigroup generated by the Hartree Hamiltonian~\eqref{eq:Hartree}, i.e.
	\begin{equation*}
		i\,\hbar\,\partial_t U_{t,s}=H_{\op}(t)\,U_{t,s},\quad\quad U_{s,s}=1.
	\end{equation*}
	By conjugating the difference of $\op$ and $\Weyl{f}$ by $U_{t,s}$ and performing the time derivative we obtain
	\begin{equation*}
		i\,\partial_t U_{t,s}^*(\op-\Weyl{f})\,U_{t,s}=U_{t,s}^*\com{K*(\rho-\rho_f),\Weyl{f}}\,U_{t,s} +U_{t,s}^*B\,U_{t,s}
	\end{equation*}
	with $B_t$ the time-dependent operator with integral kernel
	\begin{equation}\label{eq:B-term}
		B(x,y)=\left((K*\rho_f)(x)-(K*\rho_f)(y)-(\nabla K*\rho_f)\!\left(\frac{x+y}{2}\right)\cdot(x-y)\right)\op_{\hbar}^W(x,y)
	\end{equation}
	and $\rho_f(x)=\diag(\Weyl{f})(x)$. By Duhamel's formula and using the unitarity of $U_{t,s}$ we get
	\begin{equation}\label{eq:duhamel}
		\begin{split}
		\Nrm{\op-\Weyl{f}}{\L^1}&\leq \Nrm{\op^\init-\Weyl{f^\init}}{\L^1} + \frac{1}{\hbar}\int_0^t\Nrm{B_s}{\L^1}\d s
		\\
		&\quad + \frac{1}{\hbar} \int_0^t\intd \n{\rho(x)-\rho_f(x)} \Nrm{\com{K(\cdot-x),\Weyl{f}}}{\L^1}\d x \d s.
		\end{split}
	\end{equation}	
	The structure of Equation~\eqref{eq:duhamel} is clear in light of the proof of Proposition~\ref{prop:weak-strong-L1}: the second line in Equation~\eqref{eq:duhamel} is the quantum analogue of Inequality~\eqref{eq:classical-L1} and has to be treated in the same spirit of a weak-strong stability estimate, whereas the second term in the right-hand side of the first line represents the error due to the fact that we are considering two different dynamics, namely the Hartree and the Vlasov ones. The error term is easily bounded in \cite[Proposition~4.4]{lafleche_strong_2021} by
	\begin{equation*}
		\Nrm{B}{\L^1} \leq C\,\hbar^2\Nrm{\rho_f}{L^1\cap H^\frac{3}{2}}\Nrm{\Dv^2 f}{H^{2n}_{2n}},\quad\quad n>\frac{3}{2}.
	\end{equation*}
	
	\subsubsection{Proof of Theorem~\ref{thm:LS-20} part II: the commutator inequality} To bound the second line in Equation~\eqref{eq:duhamel}, we proceed as in the proof of Proposition~\ref{prop:weak-strong-L1}, taking the $L^\infty$ norm in the variable $z$ in the commutator term. This yields 
	\begin{equation}\label{eq:commutator}
		\begin{split}
		\int_0^t\!\intd \n{\rho(x)-\rho_f(x)} & \Nrm{\com{K(\cdot-x),\Weyl{f}}}{\L^1}\d x \d s
		\\
		&\ \ \leq \sup_{x\in\Rd}\Nrm{\com{K(\cdot-x),\Weyl{f}}}{\L^1} \Nrm{\op-\Weyl{f}}{\L^1}.
		\end{split}
	\end{equation}
	As is the proof of Proposition~\ref{prop:weak-strong-L1}, we need to bound the commutator ${\com{K(\cdot-x),\Weyl{f}}}$ in $\L^1$, uniformly in $x$ and in $\hbar$. This is achieved in \cite[Theorem~4.1]{lafleche_strong_2021}, that we report here for completeness.
	\begin{prop}\cite[Theorem~4.1]{lafleche_strong_2021}\label{prop:commutator-inequality}
		Let $\op$ be a self-adjoint bounded operator and $K(x)=\frac{1}{\n{x}}$, then for any $\eps\in(0,2]$, there exists $C>0$ such that
		\begin{equation*}
			\Nrm{\com{K(\cdot-x),\op}}{\L^1}\leq\,C\,\hbar\,\(\Nrm{\diag(|\Dhv\op|)}{L^{3-\eps}}+\Nrm{\diag(|\Dhv\op|)}{L^{3+\eps}}\).
		\end{equation*}
	\end{prop}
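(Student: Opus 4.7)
The plan follows the structure of the proof of Proposition~\ref{prop:weak-strong-L1}, transposed to the operator level. The key guiding principle is the kernel identity $(y_1 - y_2)\,\op(y_1,y_2) = i\hbar\,(\Dhv\op)(y_1,y_2)$, stemming from $\Dhv\op = \com{x/(i\hbar),\op}$, which will provide the factor $\hbar$ in the final bound. The role played in Proposition~\ref{prop:weak-strong-L1} by the averaged velocity gradient $\intd\n{\nabla_\xi f_2}\d\xi$ is played here by $\diag(|\Dhv\op|)$.

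First I would pass to kernels and apply the fundamental theorem of calculus. The kernel of $\com{K(\cdot-x),\op}$ is $(K(y_1-x)-K(y_2-x))\op(y_1,y_2)$, and
\begin{align*}
K(y_1-x) - K(y_2-x) = (y_1-y_2)\cdot\int_0^1 \nabla K(z_t-x)\d t, \qquad z_t := (1-t)y_2 + ty_1.
\end{align*}
Combining this with the kernel identity above, the commutator rewrites as $i\hbar\int_0^1 T_t\,\d t$, where $T_t$ is the operator whose kernel is $\nabla K(z_t-x)\cdot(\Dhv\op)(y_1,y_2)$. The announced $\hbar$ factor is thus extracted already at this stage.

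The central step is to estimate the trace norm of $\int_0^1 T_t\,\d t$ by an integral against $\diag(|\Dhv\op|)$. For the Coulomb kernel $K(y) = 1/\n{y}$, an elementary computation along the line segment from $y_2-x$ to $y_1-x$ gives the pointwise bound $\int_0^1 \n{\nabla K(z_t-x)}\d t \leq C/(\n{y_1-x}\n{y_2-x})$, so that the kernel of $\int_0^1 T_t\,\d t$ is pointwise dominated by $C\,M(y_1)M(y_2)\n{(\Dhv\op)(y_1,y_2)}$ with $M(y):=\n{y-x}^{-1}$. The goal is to deduce from this the trace-norm bound
\begin{align*}
\Nrm{\com{K(\cdot-x),\op}}{\L^1} \leq C\,\hbar \intd \n{\nabla K(y-x)}\,\diag(|\Dhv\op|)(y)\,\d y,
\end{align*}
by combining a Schatten Cauchy--Schwarz inequality with the polar decomposition $\Dhv\op = U|\Dhv\op|$ (valid since $\Dhv\op$ is self-adjoint when $\op$ is) and the identity $\tr(M|\Dhv\op|M) = \intd M(y)^2\,\diag(|\Dhv\op|)(y)\,\d y$. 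I expect this step to be the main obstacle: multiplication by $\nabla K$ lies in no Schatten class, so a direct Schatten H\"older fails, and the passage from the pointwise kernel domination to the trace-norm inequality requires a careful use of the positivity of $|\Dhv\op|$.

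Once the above inequality is established, I would conclude by H\"older's inequality in Lorentz spaces: since $\nabla K \in L^{3/2,\infty}(\Rd)$ with norm independent of $x$ by translation invariance,
\begin{align*}
\intd \n{\nabla K(y-x)}\,\diag(|\Dhv\op|)(y)\,\d y \leq C \Nrm{\diag(|\Dhv\op|)}{L^{3,1}},
\end{align*}
and the real interpolation inequality $\Nrm{g}{L^{3,1}}\leq\Nrm{g}{L^{3-\eps}} + \Nrm{g}{L^{3+\eps}}$, analogous to~\eqref{eq:regularity}, yields the bound stated in the proposition.
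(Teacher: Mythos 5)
Your overall architecture---extracting the factor $\hbar$ through the kernel identity $(y_1-y_2)\op(y_1,y_2)=i\hbar\,(\Dhv\op)(y_1,y_2)$, reducing to a bound by $\int\n{\nabla K(y-x)}\diag(\n{\Dhv\op})(y)\d y$, and concluding by H\"older against $\nabla K\in L^{3/2,\infty}$ plus real interpolation---is the right one. But there are two genuine problems at the core. First, the claimed pointwise bound $\int_0^1\n{\nabla K(z_t-x)}\d t\leq C/(\n{y_1-x}\n{y_2-x})$ is false: if the segment $[y_2,y_1]$ passes through $x$ (say $y_1-x=-(y_2-x)$), the left-hand side diverges while the right-hand side is finite. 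The inequality you actually need, $\n{K(y_1-x)-K(y_2-x)}\leq\n{y_1-y_2}/(\n{y_1-x}\n{y_2-x})$, is true, but it comes from the exact algebraic identity
\begin{equation*}
	K(y_1-x)-K(y_2-x)=\frac{(y_2-y_1)\cdot(y_1+y_2-2x)}{\n{y_1-x}\,\n{y_2-x}\,\big(\n{y_1-x}+\n{y_2-x}\big)},
\end{equation*}
not from the fundamental theorem of calculus along the segment; this identity also produces the factor $(y_1-y_2)$ exactly, with no $t$-integral to control.

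Second, and more seriously, the step you flag as ``the main obstacle'' is not a technical refinement to be supplied later: it is the entire content of the theorem, and the tools you list do not close it as described. Pointwise domination of kernels controls the Hilbert--Schmidt norm but gives no control on the trace norm (flipping signs of the entries of a rank-one positive kernel $b(y_1)b(y_2)$ can make the trace norm arbitrarily larger than that of the dominating kernel). The polar decomposition and the identity $\tr\big(M\n{\Dhv\op}M\big)=\int M^2\diag(\n{\Dhv\op})$ only become usable once the two-variable weight multiplying $(\Dhv\op)(y_1,y_2)$ is written \emph{exactly} as a superposition of products $a(y_1)b(y_2)$: then the operator is a superposition of $a\,\opA_\ell\,b$ with $a,b$ multiplication operators and $\opA_\ell$ the components of $\Dhv\op$, and writing $\opA_\ell=\n{\opA_\ell}^{1/2}U_\ell\n{\opA_\ell}^{1/2}$ gives $\Nrm{a\,\opA_\ell\, b}{\L^1}\leq(\int\n{a}^2\diag\n{\opA_\ell})^{1/2}(\int\n{b}^2\diag\n{\opA_\ell})^{1/2}$. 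The weight appearing above does not factorize, and constructing a factorized representation of the Coulomb kernel compatible with the $L^{3/2,\infty}$ integrability of $\nabla K$ is precisely the work carried out in \cite[Theorem~4.1]{lafleche_strong_2021}; your argument stops exactly where that proof has to begin. The final steps (Lorentz H\"older and $\Nrm{g}{L^{3,1}}\lesssim\Nrm{g}{L^{3-\eps}}+\Nrm{g}{L^{3+\eps}}$, or equivalently splitting $\nabla K$ into its near and far parts and using ordinary H\"older on each) are correct.
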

	
	Applying Proposition~\ref{prop:commutator-inequality} to Equation~\eqref{eq:commutator}, we are left with bounding $\diag(|\Dhv\Weyl{f}|)$ in $L^p$, for $p\in\{3-\eps,3+\eps\}$. This is the quantum analogue of Inequality~\eqref{eq:regularity}. Mimicking the classical interpolation inequality
	\begin{equation*}
		\Nrm{\Dv f}{L^{p}_x(L^1_\xi)}\leq \Nrm{\Dv f}{L^\infty_{x,\xi}}^{\frac{1}{p'}} \(\iintd \n{\Dv f} \n{\xi}^{3(p-1)} \d x \d\xi\)^{\frac{1}{p}},
	\end{equation*}
	 the boundedness of the operator $\diag(|\Dhv\Weyl{f}|)$ and the propagation of quantum moments $h^3 \tr(|\Dhv\Weyl{f}| \n{\opp}^n)$, for some integer $n$, yield
	 \begin{equation}
		\Nrm{\Diag{\n{\Dhv\Weyl{f}}}}{L^p} \leq C \Nrm{\Dhv\Weyl{f}}{\L^\infty}^{\frac{1}{p'}} \(h^3\Tr{\n{\Dhv\Weyl{f}}\n{\opp}^{3(p-1)}}\)^{\frac{1}{p}}.
	 \end{equation}
	By the Boulkhemair generalization of the Calder\'on--Vaillancourt theorem for the Weyl operators~\cite{boulkhemair_l2_1999}, it holds
	\begin{equation*}
		\Nrm{\Dhv\Weyl{f}}{\L^\infty}\leq C\,\Nrm{\Dv f}{W^{4,\infty}}
	\end{equation*}
	uniformly in $\hbar$, for some $C>0$. Furthermore, by \cite[Proposition~3.3]{lafleche_strong_2021}, the quantum velocity moments are bounded by
	\begin{equation*}
		h^3 \Tr{\n{\Dhv\Weyl{f}}\n{\opp}^{3(p-1)}} \leq C\Nrm{\Dv f}{H_\sigma^\sigma}.
	\end{equation*}
	Gr\"onwall's Lemma concludes the proof.
	

\section{\texorpdfstring{$L^2$}{L2} weak-strong uniqueness criterion}

	In this section we review the proof of~\cite{chong_l2_2022} to obtain an optimal bound in $L^2$ for the semiclassical limit from the Hartree to the Vlasov--Poisson by mimicking the proof of a weak-strong stability criterion for solutions to the Vlasov--Poisson system in $L^2$.

	\subsection{\texorpdfstring{$L^2$}{L2} weak-strong stability criterion  for the Vlasov--Poisson equation}

	\begin{prop}\label{prop:weak-strong-L2}
		Let $f_1$ and $f_2$ be two positive solutions of the Vlasov--Poisson equation verifying { $\Nrm{f_1}{L^\infty}, \Nrm{f_2}{L^\infty} \leq C_{\infty}$}. If 
		\begin{equation*}
			\lambda_{f_2}(t) = \Nrm{{\rho_{f_2}}}{L^{\infty}_x}^{\frac12} \Nrm{\Dv \sqrt{f_2}}{L^3_xL^2_\xi} + C_{\infty}^{\frac12} \Nrm{\Dv \sqrt{f_2}}{L^{3,1}_xL^1_\xi}
		\end{equation*}
		is finite for each $t\in [0,T]$, the following stability estimate holds
		\begin{equation*}
			\BNrm{\sqrt{f_1} - \sqrt{f_2}}{L^2} \leq \BNrm{\sqrt{f_1^\init} - \sqrt{f_2^\init}}{L^2} e^{\Lambda_{f_2}(t)}
		\end{equation*}
		where $\Lambda_{f_2}(t) = C\,\int_0^t \lambda_{f_2}(s)\d s$, for some $C>0$. In particular, 
		\begin{equation*}
			\Nrm{f_1-f_2}{L^2} \leq 2\, C_{\infty}^{\frac12} \Nrm{f_1^\init-f_2^\init}{L^1}^{\frac12} e^{\Lambda_{f_2}(t)}.
		\end{equation*}
	\end{prop}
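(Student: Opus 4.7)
The plan is to mimic the $L^1$ proof while working at the level of the square roots $g_i := \sqrt{f_i}$. Since $f_i \geq 0$ solves Vlasov--Poisson, $g_i$ formally solves the same linear transport equation
\[
\dpt g_i + \xi\cdot\Dx g_i + E_{f_i}\cdot\Dv g_i = 0.
\]
Subtracting and setting $g := g_1 - g_2$ yields
\[
\dpt g + \xi\cdot\Dx g + E_{f_1}\cdot\Dv g = (E_{f_2}-E_{f_1})\cdot\Dv g_2 .
\]
Multiplying by $g$ and integrating, the transport terms vanish as phase-space divergences (using that $E_{f_1}$ is independent of $\xi$), producing the $L^2$ energy identity
\[
\tfrac{1}{2}\dt\Nrm{g}{L^2}^2 = \iintd (E_{f_2}-E_{f_1})\cdot\Dv g_2\,g\,\d x\,\d\xi .
\]

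The heart of the proof is to dominate this quantity by $C\lambda_{f_2}(t)\Nrm{g}{L^2}^2$. For this I would exploit the algebraic identity $g_1+g_2 = 2g_2 + g$ to decompose
\[
\rho_2-\rho_1 = -\intd g(g_1+g_2)\,\d\xi = -2A(x)-\phi(x),
\]
where $A(x):=\intd g\,g_2\,\d\xi$ and $\phi(x):=\intd g^2\,\d\xi$. Cauchy--Schwarz in $\xi$ yields $\Nrm{A}{L^2_x}\leq \Nrm{\rho_{f_2}}{L^\infty_x}^{1/2}\Nrm{g}{L^2}$, and directly $\Nrm{\phi}{L^1_x} = \Nrm{g}{L^2}^2$. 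Accordingly $E_{f_2}-E_{f_1} = -2\nabla K * A - \nabla K * \phi$, splitting the energy identity into two pieces $I_1$ and $I_2$, each paired against $W(x) := \intd \Dv g_2\cdot g\,\d\xi$.

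For $I_1$ I would apply the weak Young inequality (since $\nabla K \in L^{3/2,\infty}$) to obtain $\Nrm{\nabla K * A}{L^6_x}\lesssim \Nrm{A}{L^2_x}$, combined with Cauchy--Schwarz in $\xi$ and H\"older in $x$ to get $\Nrm{W}{L^{6/5}_x}\leq \Nrm{\Dv g_2}{L^3_xL^2_\xi}\Nrm{g}{L^2}$; the product reproduces the first term of $\lambda_{f_2}$ times $\Nrm{g}{L^2}^2$. For $I_2$, weak Young gives $\Nrm{\nabla K * \phi}{L^{3/2,\infty}_x}\lesssim \Nrm{\phi}{L^1} = \Nrm{g}{L^2}^2$, and H\"older for Lorentz spaces $(L^{3/2,\infty}\cdot L^{3,1}\subset L^1)$ paired with the trivial bound $\Nrm{g}{L^\infty_{x,\xi}}\leq 2C_\infty^{1/2}$ yields $\Nrm{W}{L^{3,1}_x}\leq 2C_\infty^{1/2}\Nrm{\Dv g_2}{L^{3,1}_xL^1_\xi}$, producing the second term of $\lambda_{f_2}$. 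Adding the two bounds and invoking Gr\"onwall's lemma on $\dt\Nrm{g}{L^2}^2$ gives the first stability estimate with $\Lambda_{f_2}(t) = C\int_0^t\lambda_{f_2}(s)\,\d s$.

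The ``In particular'' part then follows from two elementary pointwise inequalities: $|f_1-f_2| = (g_1+g_2)|g| \leq 2C_\infty^{1/2}|g|$ gives $\Nrm{f_1-f_2}{L^2}\leq 2C_\infty^{1/2}\Nrm{g}{L^2}$, while the identity $(a-b)^2\leq |a^2-b^2|$ for $a,b\geq 0$ gives $\Nrm{g^\init}{L^2}^2 \leq \Nrm{f_1^\init-f_2^\init}{L^1}$. The most delicate point of the whole argument is the \emph{asymmetric} splitting $\rho_2-\rho_1 = -2A-\phi$: a symmetric decomposition that keeps the factor $g_1+g_2$ inside the estimate would force $\Nrm{\rho_{f_1}}{L^\infty}$ to appear, destroying the weak--strong nature of the bound; the chosen splitting carefully isolates all regularity onto $f_2$, exactly in line with the hypotheses.
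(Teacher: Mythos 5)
Your proof is correct and follows essentially the same route as the paper's: the same asymmetric decomposition $g_1+g_2=2g_2+g$ producing the two terms of $\lambda_{f_2}$, the same use of Hardy--Littlewood--Sobolev/weak Young for the $g g_2$ piece and of H\"older for Lorentz spaces ($L^{3/2,\infty}\cdot L^{3,1}\subset L^1$) together with $\Nrm{g}{L^\infty}\le 2C_\infty^{1/2}$ for the $g^2$ piece, followed by Gr\"onwall. The only cosmetic difference is that you place the convolution on $A$ and $\phi$ before applying H\"older, whereas the paper convolves against the $\Dv v_2$ factor; the two are equivalent by symmetry of the bilinear pairing.
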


	\begin{proof}
		Let $v_1=\sqrt{f_1}$, $v_2=\sqrt{f_2}$ and $v := v_1-v_2$. Then $v$ satisfies the equation
		\begin{equation*}
			\big(\dpt+\xi\cdot\Dx+E_{v_1^2}\cdot\Dv\big)v = \big(E_{v_2^2}-E_{v_1^2}\big)\cdot\Dv v_2 = \(\nabla K * \rho_{\(v_2+v_1\)v}\)\cdot\Dv v_2,
		\end{equation*}
		where we use the notations $\rho_g=\int g\,d\xi$ and $E_g=\nabla K*\rho_g$, for any function $g$. The fact that the vector field $(\xi,E_{v_1^2})$ is divergence-free yields
		\begin{equation*}
		\begin{split}
			\frac{1}{2}\frac{\d}{\d t}\Nrm{v}{L^2}^2 &= \iintd v \(\nabla K * \rho_{\(v_2+v_1\)v}\)\cdot\Dv v_2\d x\d\xi
			\\
			&= \iint_{\R^{12}} \big(\n{v(z')}^2 + 2\, v_2(z')\,v(z')\big) \nabla K(x-x') \cdot \(v(z)\, \Dv v_2(z)\) \d z\d z'
			\\
            &=: I_1 + 2 \, I_2.
            \end{split}
		\end{equation*}
        
		\noindent We bound the first term by H\"older's inequality
		\begin{equation*}
		\begin{split}
			I_1 &\leq \BNrm{\nabla K * \intd v\, \Dv v_2 \d\xi}{L^\infty_x} \Nrm{v}{L^2}^2 \leq \BNrm{\n{\nabla K} * \intd \n{\Dv v_2} \d\xi}{L^\infty_x} \Nrm{v}{L^\infty} \Nrm{v}{L^2}^2,
		\end{split}
		\end{equation*}
		and by applying H\"older's inequality for the Lorentz spaces we get 
		\begin{equation*}
			I_1 \leq \Nrm{\nabla K}{L^{\frac{3}{2},\infty}_x} \Nrm{\Dv v_2}{L^{3,1}_x L^1_\xi} \Nrm{v}{L^\infty} \Nrm{v}{L^2}^2.
		\end{equation*}
		To bound $I_2$ we use the Hardy--Littlewood--Sobolev inequality and the Cauchy--Schwarz inequality, to obtain the bound
		\begin{equation*}
		\begin{split}
			I_2 &\leq C\, \BNrm{\intd v\,v_2\d \xi\,}{L^2_x}\,\BNrm{\intd v\,\Dv v_2 \d \xi\,}{L^{\frac{6}{5}}_x}
			\\
			&\leq C \Nrm{\Nrm{v}{L^2_\xi}\Nrm{v_2}{L^2_\xi}}{L^{2}_x} \Nrm{\Nrm{v}{L^2_\xi}\Nrm{\Dv v_2}{L^2_\xi}}{L^{\frac{6}{5}}_x}.
		\end{split}
		\end{equation*}
		By H\"older's inequality, we finally obtain
		\begin{align*}
			I_2 &\leq C \Nrm{v_2}{L^\infty_xL^2_\xi}\Nrm{\Dv v_2}{L^3_xL^2_\xi} \Nrm{v}{L^2}^2 = C \Nrm{{\rho_{f_2}}}{L^\infty_x}^{\frac12} \Nrm{\Dv v_2}{L^3_xL^2_\xi} \Nrm{v}{L^2}^2,
		\end{align*}
		that combined with the bound on $I_1$ leads to
		\begin{equation*}
			\dt \Nrm{v}{L^2}^2 \leq C\, \big(\Nrm{{\rho_{f_2}}}{L^\infty_x}^{{\frac{1}{2}}} \Nrm{\Dv v_2}{L^3_xL^2_\xi} + \Nrm{v}{L^\infty} \Nrm{\Dv v_2}{L^{3,1}_x L^1_\xi}\big) \Nrm{v}{L^2}^2.
		\end{equation*}
	Using that $\Nrm{v}{L^\infty} \leq \Nrm{f_1}{L^\infty}^{\frac{1}{2}} + \Nrm{f_2}{L^\infty}^{\frac{1}{2}} = 2\, C_{\infty}^{\frac{1}{2}}$, Gr\"onwall's lemma concludes the proof.
	\end{proof}

\subsection{Application to the limit from Hartree to Vlasov--Poisson: optimal \texorpdfstring{$L^2$}{L2} bound.}

	\begin{theorem}\label{thm:L2}\cite[Theorem 1.2]{chong_l2_2022}
		Let $\op\geq 0$ be a solution to the Hartree equation~\eqref{eq:Hartree} with initial condition $\op^\init\in\L^1\cap\L^\infty$ and $f\geq 0$, $f\in L^1(\R^6)$ be a solution to the Vlasov--Poisson equation~\eqref{eq:VP} with initial condition $f^\init$ such that
		\begin{equation}\label{eq:hyp-initial}
			f^\init,\,\sqrt{f^\init}\in W^{4,\infty}_4 \cap H^4_4 \quad \text{ and } \quad \iintd f^\init \n{\xi}^{n_1}\d x\d\xi <\infty
		\end{equation}
		for $n_1 > 6$. Then there exist time-dependent functions $\Lambda, C_1, C_2 \in C^0(\R_+,\R_+)$, independent of $\hbar$ and  depending on the initial conditions of equations~\eqref{eq:Hartree} and~\eqref{eq:VP}, such that \begin{equation}\label{eq:Weyl_main_estimate}
			\Nrm{\op-\Weyl{f}}{\L^2} \leq \cC_{\infty}^{\frac12} \(\Nrm{\Wick{\sqrt{f^\init}} - \sqrt{\op^\init}}{\L^2} + C_1(t) \,\hbar\) e^{\Lambda(t)} + C_2(t)\,\hbar.
		\end{equation}
	\end{theorem}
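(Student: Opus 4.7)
The plan is to mirror the proof of Proposition~\ref{prop:weak-strong-L2} at the operator level, monitoring in the Hilbert--Schmidt norm the discrepancy
\begin{equation*}
	\opw(t) := \sqrt{\op(t)} - \Wick{\sqrt{f(t)}},
\end{equation*}
which plays the role of $v := \sqrt{f_1}-\sqrt{f_2}$ in the classical estimate. Both square roots are well-defined nonnegative self-adjoint operators ($\op\geq 0$ and the T\"oplitz quantization preserves positivity). The first step is a reduction from $\Nrm{\op-\Weyl{f}}{\L^2}$ to $\Nrm{\opw}{\L^2}$. Decomposing
\begin{equation*}
	\op - \Weyl{f} = \big((\sqrt{\op})^2 - (\Wick{\sqrt{f}})^2\big) + \big((\Wick{\sqrt{f}})^2 - \Wick{f}\big) + \big(\Wick{f} - \Weyl{f}\big),
\end{equation*}
the last two parentheses are $O(\hbar)$ in $\L^2$ by standard semiclassical comparisons between Weyl and T\"oplitz quantizations and between the operator square and the quantization of the pointwise square. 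The first parenthesis factors non-commutatively as $\sqrt{\op}\,\opw + \opw\,\Wick{\sqrt{f}}$, so by H\"older in Schatten classes it is bounded by $(\Nrm{\sqrt{\op}}{\L^\infty}+\Nrm{\Wick{\sqrt{f}}}{\L^\infty})\Nrm{\opw}{\L^2} \leq 2\,\cC_\infty^{1/2}\Nrm{\opw}{\L^2}$, which produces the prefactor $\cC_\infty^{1/2}$ in \eqref{eq:Weyl_main_estimate}.

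Next I derive an evolution equation for $\opw$. Since $\op(t) = U_{t,0}\,\op^\init\,U_{t,0}^*$ for the unitary Hartree propagator, one has $i\hbar\,\dpt\sqrt{\op} = [H_\op,\sqrt{\op}]$; while the chain rule shows that $\sqrt{f}$ obeys the same transport equation as $f$. Subtracting the T\"oplitz quantization of the latter yields
\begin{equation*}
	i\hbar\,\dpt \opw = [H_\op,\opw] + \big[K*(\diag(\op)-\rho_f),\,\Wick{\sqrt{f}}\big] + \mathcal{E},
\end{equation*}
where $\mathcal{E} := [H_{\Wick{f}},\Wick{\sqrt{f}}] - i\hbar\,\dpt\Wick{\sqrt{f}}$ encodes the dynamical defect between Wick-quantized Vlasov transport and the Hartree flow applied to a T\"oplitz symbol. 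Under the regularity and moment assumptions \eqref{eq:hyp-initial} propagated by Vlasov--Poisson, a symbolic expansion yields $\Nrm{\mathcal{E}}{\L^2}\lesssim \hbar^2$, giving the $C_2(t)\,\hbar$ remainder in \eqref{eq:Weyl_main_estimate}. Conjugating $\opw$ by $U_{t,0}$ eliminates the $[H_\op,\opw]$ term, and Duhamel's formula reduces the task to controlling $\hbar^{-1}\Nrm{[K*(\diag(\op)-\rho_f),\Wick{\sqrt{f}}]}{\L^2}$ by $\lambda(t)\Nrm{\opw}{\L^2}$.

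The heart of the proof is this nonlinear commutator estimate, which is the quantum avatar of $I_1+2I_2$ in Proposition~\ref{prop:weak-strong-L2}. Writing $\diag(\op)-\rho_f = \diag(\sqrt{\op}\,\opw + \opw\,\Wick{\sqrt{f}}) + O(\hbar)$ by the factorization of the reduction step and distributing the commutator yields two contributions. The first is treated in the spirit of Proposition~\ref{prop:commutator-inequality}: one extracts the $L^\infty$ norm in the convolution variable and invokes H\"older in Lorentz spaces to produce a factor $\Nrm{\Dhv\Wick{\sqrt{f}}}{L^{3,1}_xL^1_\xi}$, itself uniformly controlled in $\hbar$ by the quantum interpolation argument used after Proposition~\ref{prop:commutator-inequality}, combined with the Boulkhemair--Calder\'on--Vaillancourt bound and the propagation of quantum velocity moments of $\sqrt{f}$. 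The second contribution is handled via Hardy--Littlewood--Sobolev after an operator Cauchy--Schwarz pulling out one factor of $\opw$, producing $\Nrm{\rho_f}{L^\infty}^{1/2}\Nrm{\Dhv\Wick{\sqrt{f}}}{L^3_xL^2_\xi}$. All Sobolev quantities of $\sqrt{f}$ required remain finite on $[0,T]$ thanks to \eqref{eq:hyp-initial}, and Gr\"onwall's lemma closes the estimate on $\Nrm{\opw}{\L^2}$. Composing with the reduction step yields \eqref{eq:Weyl_main_estimate}.

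The main obstacle is that the classical identity $v_1^2-v_2^2 = (v_1+v_2)\,v$ must everywhere be replaced by its non-commutative ordered version $\sqrt{\op}\,\opw + \opw\,\Wick{\sqrt{f}}$. Every H\"older and Cauchy--Schwarz step of the classical proof of Proposition~\ref{prop:weak-strong-L2} has to be re-realized in Schatten classes with careful bookkeeping of left/right multiplication, without losing powers of $\hbar$. A second, more technical, source of difficulty is that $\diag(\op)$, $\Diag{\Wick{f}}$ and $\rho_f$ agree only up to $O(\hbar)$ corrections, and these must be absorbed cleanly into $\mathcal{E}$ without spoiling the $\cC_\infty^{1/2}$ prefactor or the Gr\"onwall closure.
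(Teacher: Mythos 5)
Your proposal is correct in outline but follows a genuinely different route from the paper, and the difference matters for where the technical work lands. You compare $\sqrt{\op(t)}$ with the \emph{time-dependent} T\"oplitz operator $\Wick{\sqrt{f(t)}}$, which forces you to estimate the dynamical defect $\mathcal{E}=\com{H_{\Wick{f}},\Wick{\sqrt{f}}}-i\hbar\,\dpt\Wick{\sqrt{f}}$. The paper instead introduces the auxiliary operator $\widetilde{\op}(t)$ solving the \emph{linear} equation $i\hbar\,\dpt\widetilde{\op}=\com{H_f,\widetilde{\op}}$ with initial datum $\Wick{\sqrt{f^\init}}^2$; since that evolution is unitary conjugation, $\sqrt{\widetilde{\op}(t)}$ solves exactly the same commutator equation with no defect at all at the square-root level, and all quantization errors are confined to the separate quadratic-level comparison $\Nrm{\widetilde{\op}-\Weyl{f}}{\L^2}$, where the Weyl/T\"oplitz comparison lemmas are needed only at time zero. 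Your square-root Gr\"onwall then runs on $\com{H_{\op}-H_{\Wick{f}},\cdot}$ where the paper's runs on $\com{H_{\op}-H_f,\cdot}$; the reduction producing the prefactor $\cC_\infty^{1/2}$ is the same in both.

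The one step you assert rather than prove is $\Nrm{\mathcal{E}}{\L^2}\lesssim\hbar^2$, and it is not innocuous: $\Wick{g}=\Weyl{G_\hbar*g}$ with a Gaussian of width $\sqrt{\hbar}$, so commuting $G_\hbar\,*$ past the transport operator $\xi\cdot\Dx+E_f\cdot\Dv$ produces corrections that are a priori only $O(\sqrt{\hbar})$ per derivative; one must use the evenness of $G_\hbar$ to cancel the half-integer powers and reach $O(\hbar^2)$, at the cost of two extra derivatives on $\sqrt{f}$ and on $E_f$ (and similar care is needed for $\diag(\Wick{f})-\rho_f$). This is precisely the computation the paper's auxiliary dynamics is designed to avoid. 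Provided you carry it out --- it is within reach under~\eqref{eq:hyp-initial} and the propagated regularity --- and provided the trace/H\"older bookkeeping in your nonlinear commutator step is done with the ordered factorization $\sqrt{\op}\,\opw+\opw\,\Wick{\sqrt{f}}$ as you indicate, your argument closes; it simply trades the paper's clean splitting for a heavier symbolic-calculus estimate.
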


\subsubsection{Proof of Theorem~\ref{thm:L2} part I: auxiliary dynamics}

	We consider the Weyl quantization of~\eqref{eq:VP} and rewrite it as
	\begin{system*}
		i\,\hbar\,\dpt\Weyl{f} &=\com{H_f,\Weyl{f}}-B_f(\Weyl{f})
		\\
		\Weyl{f}(0) &=\Weyl{f^\init},
	\end{system*}
	with $H_f=-\frac{\hbar^2}{2}\Delta+V_f$, being $V_f$ the multiplication operator $V_f(x)=(K*\rho_f)(x)$ and $B$ the operator with kernel~\eqref{eq:B-term}. To mimic the $L^2$ weak-strong uniqueness criterion given in Proposition~\ref{prop:weak-strong-L2}, we want to consider square roots of operators, that are well-defined only for nonnegative operators. For this reason we introduce the Wick quantization  of $\sqrt{f^\init}$ (also called T\"oplitz operator)
	\begin{equation*}
		\Wick{\sqrt{f^\init}} = \Weyl{G_{\hbar}*\sqrt{f^\init}}
	\end{equation*}
	where $G_{\hbar}(z):=(\pi\hbar)^{-3}e^{-\n{z}^2/\hbar}$, $z\in\R^6$, and consider its time-evolution $\widetilde{\op}$ solving the linear Hartree-type equation
	\begin{system}\label{eq:linear-Hartree}
		i\,\hbar\,\dpt\widetilde{\op} &= \com{H_f,\widetilde{\op}\,},
		\\
		\widetilde{\op}^\init &:= \Wick{\sqrt{f^\init}}^2.
	\end{system}
	We therefore bound the difference of $\op$ and $\Weyl{f}$ using the auxiliary dynamics~\eqref{eq:linear-Hartree} as follows
	\begin{equation}\label{eq:L2-bound}
		\Nrm{\op-\Weyl{f}}{\L^2}\leq\Nrm{\op-\widetilde{\op}}{\L^2}+\Nrm{\widetilde{\op}-\Weyl{f}}{\L^2}.
	\end{equation}
	The second term on the right-hand side of~\eqref{eq:L2-bound} measures the error in considering the evolution of the T\"{o}plitz quantization instead of the Weyl quantization. Gr\"{o}nwall's Lemma yields
	\begin{equation}\label{eq:term2-L2}
	\begin{split}
		\Nrm{\widetilde{\op}-\Weyl{f}}{\L^2}
		&\leq\Nrm{\widetilde{\op}^\init-\Weyl{f^\init}}{\L^2}
		\\
		&\qquad +C\hbar\int_0^t\Nrm{\nabla E_f}{L^\infty_x}\Nrm{\nabla_\xi^2 f}{L^2}\d s.
	\end{split}
	\end{equation}
	The second term on the right-hand side of~\eqref{eq:term2-L2} is bounded under the assumptions~\eqref{eq:hyp-initial} on $f^\init$. This follows indeed by the propagation of Sobolev regularity for solutions to the Vlasov--Poisson equation~\eqref{eq:VP} (cf.~\cite{lafleche_strong_2021}). As for the first term on the right-hand side of~\eqref{eq:term2-L2}, we use that
	\begin{align*}
		\Nrm{\widetilde{\op}^\init-\Weyl{f^\init}}{\L^2}
		&=\Nrm{\Wick{\sqrt{f^\init}}^2-\Weyl{f^\init}}{\L^2}
		\\
		&\leq\Nrm{\Wick{\sqrt{f^\init}}^2-\Wick{f^\init}}{\L^2}+\Nrm{\Wick{f^\init}-\Weyl{f^\init}}{\L^2}.
	\end{align*}
	By standard properties of the T\"oplitz quantization (cf.~for instance \cite{lions_sur_1993,chong_l2_2022}) 
	\begin{equation*}
		\Nrm{\Wick{f^\init}-\Weyl{f^\init}}{\L^2}\lesssim\hbar\Nrm{\nabla^2 f^\init}{\L^2},
	\end{equation*}
	whereas $\Nrm{\Wick{\sqrt{f^\init}}^2 - \Wick{f^\init}}{\L^2}$ can be bounded by interpolation using that (cf.~\cite[Lemma 3.1]{chong_l2_2022} for a detailed proof)
	\begin{equation*}
		\Nrm{\Wick{\sqrt{f^\init}}^2-\Wick{f^\init}}{\L^1}\lesssim\hbar\Nrm{\nabla f^\init}{L^2}^2
	\end{equation*}
	and 
	\begin{equation*}
		\ \ \ \Nrm{\Wick{\sqrt{f^\init}}^2-\Wick{f^\init}}{\L^\infty}\lesssim\hbar\Nrm{\nabla f^\init}{L^\infty}^2.
	\end{equation*}
	
	\subsubsection{Proof of Theorem 3.1 part II: $L^2$ approach}
	
	We now focus on the first term on the right-hand side of~\eqref{eq:L2-bound}, where we aim at reproducing  for operators the strategy adopted in Proposition~\ref{prop:weak-strong-L2}. We have
	\begin{equation}\label{eq:L2-main-term}
		\Nrm{\op-\widetilde{\op}}{\L^2}\leq\Nrm{\sqrt{\op}-\sqrt{\widetilde{\op}}}{\L^2}\(\Nrm{\sqrt{\op}}{\L^\infty}+\Nrm{\sqrt{\widetilde{\op}}}{\L^\infty}\).
	\end{equation}
	If $f^\init\geq 0$, then $\Wick{\sqrt{f^\init}} \geq 0$ and therefore its time-evolution given by Equation~\eqref{eq:linear-Hartree} $\widetilde{\op}\geq 0$. Hence the right-hand side of~\eqref{eq:L2-main-term} is well-defined. Furthermore, $\sqrt{\op}$ and $\sqrt{\widetilde{\op}}$ are bounded operators by assumption. Whence, $\Nrm{\op-\widetilde{\op}}{\L^2}\lesssim\Nrm{\sqrt{\op}-\sqrt{\widetilde{\op}}}{\L^2}$.
	We are now in the position of mimicking the $L^2$ weak-strong uniqueness criterion. Performing the time derivative on $(\sqrt{\op}-\sqrt{\widetilde{\op}})$ and  using that if $\op$ solves~\eqref{eq:Hartree} then for $\phi$ smooth function $\phi(\op)$ solves 
	\begin{equation*}
		i\,\hbar\,\dpt\phi(\op)=\com{H_{\op},\phi(\op)},
	\end{equation*}
	Gr\"onwall's Lemma yields
	\begin{equation*}
		\Nrm{\sqrt{\op}-\sqrt{\widetilde{\op}}}{\L^2}\lesssim\Nrm{\sqrt{\op^\init}-\sqrt{\widetilde{\op}^\init}}{\L^2}e^{\Lambda(t)}+\hbar\int_0^t C(s)\,e^{(\Lambda(t)-\Lambda(s))}\d s,
	\end{equation*} 
	with $C(s)$ and $\Lambda(s)$ independent of $\hbar$ and finite under the assumptions on $f^\init$. This latter follows by propagation of regularity for solutions to the Vlasov--Poisson equation~\eqref{eq:VP} (cf.~\cite{lafleche_strong_2021}) and to the linear auxiliary dynamics (cf.~\cite{chong_l2_2022}).

\section{Quantum Optimal transport pseudo-distance}\label{subsec:opt-transport_classical}

	The strategies of Theorem~\ref{thm:LS-20} and Theorem~\ref{thm:L2} are quite general and they generalizes to the relativistic case (cf.~\cite{leopold_propagation_2022}). Application of these methods to other models are certainly possible. Although quite general, the weak-strong type criterion requires some regularity on the solution of the Vlasov--Poisson equation~\eqref{eq:VP}, that is not always compatible with certain classes of quantum states, such as pure states. To treat this important set of initial conditions, one can look at weaker topologies, in which almost no regularity conditions on derivatives of solutions to the Vlasov--Poisson are required.
	
\subsection{Classical optimal transport stability estimate}

	Among the classical metrics on the set of probability measures are the Wasserstein--(Monge--Kantorovich) distances. For $f_1$ and $f_2$ two probability distributions on the phase space $\Rdd$, one defines the set of couplings by $\cC(f_1,f_2)$ as the set of probability measures $\gamma$ on $\Rdd\times\Rdd$ with first marginal $f_1$ and second marginal $f_2$, that is $\intdd \gamma(\cdot,\d y) = f_1$ and $\intdd \gamma(\d x,\cdot) = f_2$. Then, for any $p\geq 1$, the Wasserstein distance of order $p$ is defined by
	\begin{equation*}
		W_p(f_1, f_2) = \(\inf_{\gamma\in\cC(f_1,f_2)} \iintd \n{z_1-z_2}^p \gamma(\d z_1\d z_2)\)^{1/p}.
	\end{equation*}
	It gives a natural distance to compare two solutions of the Vlasov equation. To explain this, let us define the characteristic flow $z(t,z_0) = (x(t,z_0),\xi(t,z_0))$ associated to a solution $f$ of the Vlasov equation by $z(0,z_0) = z_0$ and
	\begin{system*}
		&\dpt x(t,z_0) = \xi(t,z_0)
		\\
		&\dpt\xi(t,z_0) = E_f(x(t,z_0))
	\end{system*}
	and it verifies for any bounded continuous function $\varphi$
	\begin{equation*}
		\intdd \varphi(z) \,f(t,z)\d z = \intdd \varphi(x(t,z_0),\xi(t,z_0)) \,f^\init(z_0) \d z_0.
	\end{equation*}
	Then, if $E_{f_1}$ and $E_{f_2}$ are sufficiently close and regular, one observes that two initially close characteristic trajectories $z_1 = z_1(t,z_0)$ following the flow of $f_1$ and $z_2 = z_2(t,z'_0)$ following the flow of $f_2$ will remain close since
	\begin{align*}
		\dpt \n{z_1-z_2}^2 &= 2\(x_1-x_2\)\cdot\(\xi_1-\xi_2\) + 2\(\xi_1-\xi_2\)\cdot\(E_{f_1}(x_1) - E_{f_2}(x_2)\)
		\\
		&\leq \(1+\omega_{f_1}(x_1,x_2)^2\)\n{x_1-x_2}^2 + 3 \n{\xi_1-\xi_2}^2 + \n{E_{f_1}(x_2)-E_{f_2}(x_2)}^2
	\end{align*}
	where 
	\begin{equation*}
		\omega_{f_1}(x_1,x_2) = \sup_{x_1,x_2} \frac{\n{E_{f_1}(x_1) - E_{f_1}(x_2)}}{\n{x_1-x_2}}	
	\end{equation*}
	so that one should expect that $\n{z_1(t)-z_2(t)}$ does not grow too fast. Hence, to control an initially small quantity of the form
	\begin{equation}\label{eq:test_transport}
		\Eps_p = \iintd \n{z_1-z_2}^p \gamma(t,\d z_1\d z_2)
	\end{equation}
	one can choose $\gamma$ such that
	\begin{equation*}
		\iintdd \varphi(z_1,z_2) \,\gamma(t,z_1,z_2)\d z_1\d z_2 = \iintdd \varphi(z_1(t,z),z_2(t,z')) \,\gamma^\init(z,z')\d z\d z'
	\end{equation*}
	or equivalently, $\gamma$ solving
	\begin{equation*}
		\dpt \gamma + \(\xi_1\cdot\nabla_{x_1}+E_{f_1}\cdot\nabla_{\xi_1}\)\gamma  +\(\xi_2\cdot\nabla_{x_2}+E_{f_2}\cdot\nabla_{\xi_2}\)\gamma = 0.
	\end{equation*}
	with initial condition $\gamma(0,\cdot,\cdot) = \gamma^\init(\cdot,\cdot)$. Taking the initial coupling $\gamma^\init$ to be the optimal coupling then ensures that~\eqref{eq:test_transport} is initially small if $f_1$ and $f_2$ are initially close in $W_p$. In the case of the Coulomb potential and when $\rho_{f_1}$ and $\rho_{f_2}$ are uniformly bounded, then $E_{f_1}$ is log-Lipshitz, that is $\omega_{f_1}(x_1,x_2) \leq C\max\!\(1,-\ln(\n{x_1-x_2})\)$, and it was proved by Loeper~\cite{loeper_uniqueness_2006} that $\n{E_{f_1}(x_2)-E_{f_2}(x_2)}$ is controlled by $W_2(f_1,f_2)$. As proved in \cite{loeper_uniqueness_2006}, this leads to an inequality of the form
	\begin{equation}\label{eq:Gronwall_W2}
		\dpt \Eps_2(t) \leq C(t)\,\Eps_2(t) \max\!\(1,-\ln(\Eps_2(t))\).
	\end{equation}
	where $C(t)>0$ depends on the growth of $\Nrm{\rho_{f_1}}{L^\infty}+\Nrm{\rho_{f_2}}{L^\infty}$. One then concludes by Gronwall's lemma to a stability estimate of the form
	\begin{equation*}
		W_2(f_1,f_2) \leq W_2(f_1^\init,f_2^\init)^{e^{\theta\lambda(t)}}\,e^{e^{\lambda(t)}}
	\end{equation*}
	where $\theta = \sign(\ln W_2(f_1,f_2))$ and $\lambda(t)>0$ depends on $C(t)$, and this implies uniqueness for the Vlasov--Poisson equation. Let us mention that using a different weight for the phase space variables (see~\cite{iacobelli_new_2022}), one can also obtain an inequality with a square root of the logarithm instead of a logarithm in Inequality~\eqref{eq:Gronwall_W2}.

\subsection{Quantum optimal transport}

	To obtain an application to semiclassical estimates, one can use the semiclassical analogue of the optimal transport distance defined by Golse and Paul~\cite{golse_schrodinger_2017}. Let $\cP$ be the set of density operators, that is the set of positive trace class operators verifying $h^3\Tr{\op}=1$. As in the classical case, one first introduces the notion of a coupling between a density operator $\op$ and a classical kinetic density $f$ as an operator valued function $\opgam\in L^1(\Rdd,\cP)$ such that
	\begin{align*}
		h^3\Tr{\opgam(z)} = f(z), \qquad \text{ and }  \qquad\intdd \opgam(z)\d z = \op.
	\end{align*}
	Denoting by $\cC(f,\op)$ the set of semiclassical couplings, the semiclassical optimal transport pseudometric of order $2$ is defined by
	\begin{equation}\label{def:Wh}
		\Wh(f,\op) := \(\inf_{\opgam\in\cC(f,\op)}\intdd h^3\Tr{\mathbf{c}_\hbar(z)\opgam(z)}\d z\)^\frac{1}{2},
	\end{equation}
	where for any $z\in\Rdd$, $\opc_\hbar(z)$ is an operator defined by it action on functions $\varphi$ of the form $\opc_\hbar(z)\varphi(y) = (\n{x-y}^2 + \n{\xi-\opp}^2)\varphi(y)$, with $z = (x,\xi)$ and $\opp = -i\hbar\nabla_y$. This is not a distance, as it verifies for example $\Wh(f,\op)^2 \geq 3\,\hbar$, but it is comparable to the classical Wasserstein distance $W_2$ between the Husimi transform $\tilde{f}_{\op}$ of the quantum density operator and the normal kinetic density $f$ in the sense that it verifies~\cite{golse_schrodinger_2017}
	\begin{equation}\label{eq:comparaison_W2}
		W_2(f,\tilde{f}_{\op})^2 \leq \Wh(f,\op)^2 + 3\,\hbar,
	\end{equation}
	and in the special case when $\op$ is the Wick quantization of some function $g$,
	\begin{equation}\label{eq:comparaison_W2_2}
		\Wh(f,\Wick{g})^2 \leq W_2(f,g)^2 + 3\,\hbar.
	\end{equation}
	Moreover, it verifies approximate triangle inequalities~\cite{golse_quantum_2021}. For general density matrices, it can be proved that $\Wh(f,\op) \leq W_2(f,\tilde{f}_{\op}) + \sqrt{3\hbar} + \hbar \,\norm{\Dh\sqrt{\op}}_{\L^2}$ (see~\cite{lafleche_quantum_2023}). In particular, this allows to treat more singular states such as some classes of projection operators verifying $\op^2 = \op \geq 0$ for which in general one can obtains bounds of the form $\hbar\,\norm{\Dh\sqrt{\op}}_{\L^2} = \hbar\,\norm{\Dh\op}_{\L^2} \leq C\,\hbar^\theta$ for some $\theta\in[0,1/2]$ (see \cite{lafleche_optimal_2023}).
	
\subsection{Application to the semiclassical limit}

	The same strategy as in the classical case was used in~\cite{lafleche_propagation_2019, lafleche_global_2021} to prove the following theorem.
	\begin{theorem}\cite[Theorem~5]{lafleche_propagation_2019}\label{th:CV_VP}
		Let $f$ be a solution of the Vlasov--Poisson equation~\eqref{eq:VP} and $\op$ be a solution of the Hartree equation~\eqref{eq:Hartree} equation with respective initial conditions $f^\init$ and $\op^\init$ verifying
		\begin{align*}
			f^\init \in \P \cap L^1_{n_0}\cap L^\infty_n \qquad \text{ and } \op^\init \in \cP\cap \L^1_{16}\cap\L^\infty_4.
		\end{align*}
		uniformly in $\hbar$ for some $(n_0,n)\in\R_+^2$ verifying $n_0 > 6$ and $n>3$. Then there exists $T>0$ such that
		\begin{equation}\label{eq:propag_moments}
			h^3\Tr{\n{\opp}^{16}\op} \in L^\infty([0,T]) \qquad \text{ and } \qquad\op \in L^\infty([0,T],\L^\infty_4),
		\end{equation}
		uniformly in $\hbar$, and there exists a constant $C_T$ depending only on the initial conditions and independent of $\hbar$ such that
		\begin{equation}\label{eq:Wh_stability}
			\Wh(f,\op) \leq \Wh(f^\init,\op^\init)^{e^{\theta\lambda(t)}} \,e^{e^{\lambda(t)}}
		\end{equation}
		where $\theta = \sign(\ln \Wh(f,\op))$ and $\lambda(t)\in C^0([0,T],\R_+)$ is an increasing function of time depending on the growth of $\Nrm{\rho_f}{L^\infty(\Rd)} + \Nrm{\rho}{L^\infty(\Rd)}$ and verifying $\lambda(0) = 0$.
	\end{theorem}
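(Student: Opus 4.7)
The plan is to adapt the classical Loeper--Wasserstein strategy recalled in Section~\ref{subsec:opt-transport_classical} to the semiclassical setting, by propagating a quantum coupling $\opgam(t,z)\in\cC(f(t),\op(t))$ and running a Gr\"onwall argument on its quadratic cost. The proof splits naturally into a \emph{propagation of moments} block, yielding~\eqref{eq:propag_moments} and the uniform density bounds, and a \emph{stability} block producing~\eqref{eq:Wh_stability}.

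For the propagation step, I would track simultaneously the quantum kinetic moment $M_{16}(t) := h^3\Tr{\n{\opp}^{16}\op(t)}$ and the weighted operator norm $\Nrm{\op(t)}{\L^\infty_4}$. Commuting the Hartree equation~\eqref{eq:Hartree} with $\n{\opp}^{16}$, respectively with the weight $\sfm = 1+\n{\opp}^4$, eliminates the free Laplacian and leaves source terms $\com{V_\op,\n{\opp}^n}$ controlled by $\Nrm{\nabla V_\op}{L^\infty}$. The Lions--Perthame kinetic interpolation admits an operator counterpart,
\begin{equation*}
	\Nrm{\Diag{\op}}{L^\infty_x} \lesssim \Nrm{\op}{\L^\infty_n}^{\frac{3}{n+3}}\,\bigl(h^3\Tr{\n{\opp}^n\op}\bigr)^{\frac{n}{n+3}}, \qquad n>3,
\end{equation*}
and the same interpolation bounds $\Nrm{\rho_f}{L^\infty_x}$ in terms of $\Nrm{f}{L^\infty_n}$ and a velocity moment of $f$. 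This closes the Gr\"onwall system on the quadruple $(M_{16},\Nrm{\op}{\L^\infty_4},\Nrm{f}{L^\infty_n},\int\n{\xi}^{n_0}f)$ and yields a maximal time $T>0$ on which~\eqref{eq:propag_moments} holds uniformly in $\hbar$.

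For the stability step, starting from an optimal (or near-optimal) initial coupling $\opgam^\init\in\cC(f^\init,\op^\init)$, I would propagate it by the mixed transport/commutator equation
\begin{equation*}
	i\hbar\bigl(\dpt\opgam + \xi\cdot\Dx\opgam + E_f(x)\cdot\Dv\opgam\bigr) = \com{H_\op,\opgam(z)},
\end{equation*}
where the commutator acts on the quantum variable of $\opgam(z)$. Taking $h^3\Tr{\cdot}$ and integrating in $z$ show that the marginals $f(t)$ and $\op(t)$ are preserved. Setting $\Eps(t) := \intdd h^3\Tr{\opc_\hbar(z)\,\opgam(t,z)}\d z \geq \Wh(f,\op)^2$, a direct computation — using that $\opc_\hbar$ commutes with the free quantum evolution, that $(\xi,E_f)$ is divergence-free in phase space, and a Cauchy--Schwarz step — yields a differential inequality schematically of the form
\begin{equation*}
	\dpt\Eps \lesssim \Eps + \sqrt{\Eps}\,\Nrm{E_f-E_\op}{L^2_x}.
\end{equation*}
Splitting $E_f(x)-E_\op(y) = (E_f(x)-E_f(y))+(E_f(y)-E_\op(y))$, the first piece is log-Lipschitz in $x-y$ (since $\rho_f$ and $\Diag{\op}$ are in $L^\infty$ by the previous step), producing a contribution of the form $\Eps\max(1,-\ln\Eps)$; the second is handled by a quantum analogue of Loeper's inequality $\Nrm{E_f-E_\op}{L^2_x}\lesssim\bigl(\Nrm{\rho_f}{L^\infty}+\Nrm{\Diag{\op}}{L^\infty}\bigr)^{1/2}\Wh(f,\op)$. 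Assembling the two contributions gives $\dpt\Eps\leq C(t)\Eps\max(1,-\ln\Eps)$, and Osgood's lemma integrates to~\eqref{eq:Wh_stability}.

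\textbf{Main obstacle.} The hardest point is the quantum Loeper inequality comparing $E_f-E_\op$ to $\Wh(f,\op)$: unlike in the classical setting, $\Diag{\op}$ is not directly a marginal of $\opgam$ in the quantum variable, so one has to trace out the operator degrees of freedom carefully and exploit that $\opc_\hbar(z)$ simultaneously controls position and momentum discrepancies through the positive operators $\n{x-y}^2$ and $\n{\xi-\opp}^2$. A secondary difficulty is the rigorous construction of the evolving coupling $\opgam(t,\cdot)$, where positivity, preservation of trace class, and sufficient regularity in $z$ must be verified to legitimate the Gr\"onwall computation, given that the generator mixes a classical transport in $(x,\xi)$ with a quantum commutator.
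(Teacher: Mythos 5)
Your overall architecture matches the paper's: a propagation-of-moments block giving the uniform $L^\infty$ bounds on the densities, followed by a Golse--Paul/Loeper-type Gr\"onwall on the cost $\Eps(t)$ of a propagated coupling, with the log-Lipschitz splitting of the force and Osgood's lemma producing the double-exponential estimate~\eqref{eq:Wh_stability}. The stability block is essentially identical to the paper's (same coupling equation, same splitting of $E_f(x)-E_\op(y)$, same quantum Loeper inequality), so there is nothing to add there beyond the difficulties you already identify.

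The propagation block is where you genuinely diverge, and where your sketch is optimistic. The paper does \emph{not} run a direct Gr\"onwall on $\Nrm{\op}{\L^\infty_4}$: it first seeds the moment hierarchy by bounding the second moment through energy conservation combined with Lieb--Thirring inequalities and the conservation of $\Nrm{\op}{\L^1}+\Nrm{\op}{\L^\infty}$ (an ingredient absent from your sketch, and without which the moment Gr\"onwall has nothing to start from); it then propagates the weighted Schatten norms $\Nrm{\op\,\n{\opp}^n}{\L^p}$ for \emph{finite} $p$ with constants uniform in $p$ and $\hbar$, and only obtains the $\L^\infty_4$ bound in the limit $p\to\infty$. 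This detour is not cosmetic: the weighted operator norm is not differentiable along the flow in the way your argument needs, whereas for finite $p$ one can differentiate $\Nrm{\cdot}{\L^p}^p$ through a trace identity. Relatedly, your interpolation inequality
\begin{equation*}
	\Nrm{\Diag{\op}}{L^\infty_x} \lesssim \Nrm{\op}{\L^\infty_n}^{\frac{3}{n+3}}\bigl(h^3\Tr{\n{\opp}^n\op}\bigr)^{\frac{n}{n+3}}
\end{equation*}
is the shape of the bound for an $L^p$ norm of the density with $p$ determined by $n$, not for $L^\infty$; the passage from a weighted operator bound to $\Nrm{\Diag{\op}}{L^\infty}$ is itself one of the nontrivial points of~\cite{lafleche_propagation_2019} (the diagonal of a kernel is not controlled by an operator norm alone), and it is precisely what the uniform-in-$p$ route is designed to deliver. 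So your plan is correct in spirit but would need to be restructured along these lines to close; what the paper's route buys is a rigorous replacement for the two steps you treat as routine, at the cost of carrying the whole scale of Schatten norms.
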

	
	The function $\lambda(t)$ is detailed in \cite[Theorem~5]{lafleche_global_2021}. The above theorem implies in particular that if initially there exists a constant $C$ independent of $\hbar$ such that $\Wh(f^\init,\op^\init) \leq C\sqrt{\hbar} \leq 1$, then
	\begin{equation*}
		W_2(f,\tilde{f}_{\op}) \leq \hbar^{e^{-\lambda(t)}} \,e^{e^{\lambda(t)}}.
	\end{equation*}
	
	\begin{proof}[Steps of the proof]
		Several steps are needed to get the complete result. As in the classical case, the stability estimate in the optimal transport metric uses the fact that both $\rho_f$ and $\rho$ are bounded (uniformly in $\hbar$). To get this property for the Hartree equation, one proceeds in three steps.
		
		\begin{itemize}
		\item First, one proves the propagation of moments of the form $h^3\Tr{\n{\opp}^n\op}$ with $n\in 2\N$ \cite[Theorem~3]{lafleche_propagation_2019}. The proof uses the fact that moments of order two are bounded by the energy conservation, the fact that $\Nrm{\op}{\L^1} + \Nrm{\op}{\L^\infty}$ is conserved and Lieb--Thirring type inequalities. This is at this step that only a local in time propagation is proved, which prevents the theorem to hold for all times $T>0$.
		\item The second point is to prove the propagation of weighted Schatten norms of the form $\Nrm{\op\,\n{\opp}^n}{\L^p}$ uniformly with respect to $p$ and $\hbar$~\cite[Proposition~5.3]{lafleche_propagation_2019} using the previously proved boundedness of moments.
		\item The last point consists in taking $p\to\infty$ to get that $\Nrm{\op\,\(1+\n{\opp}^n\)}{\L^\infty}$ is bounded uniformly in $\hbar$, from which one deduces that $\Nrm{\rho}{L^\infty}$ is bounded uniformly in $\hbar$.
		\end{itemize}
		Once these points are proved, the proof of the theorem follows as in~\cite{golse_schrodinger_2017} by adapting the classical proof of stability for the Vlasov equation but replacing one of the solutions by a solution of the Hartree equation. More precisely, for any coupling $\opgam^\init\in \cC(f^\init,\op^\init)$, one looks at the solution $\opgam = \opgam(t,z)$ to
		\begin{equation*}
			\dpt \opgam + \(\xi\cdot\Dx + E_f\cdot\Dv\)\opgam - \frac{1}{i\hbar} \com{H_{\op},\opgam}
		\end{equation*} 
		and obtains a Gr\"onwall's type inequality on (the logarithm of)
		\begin{equation*}
			\Eps_{2,\hbar}(t) := \intdd h^3\Tr{\mathbf{c}_\hbar(z)\opgam(t,z)}\d z.
		\end{equation*}
		Since $\Wh(f,\op)\leq \Eps_{2,\hbar}$, the result follows by minimizing over all couplings $\opgam^\init$.
	\end{proof}

{\bf Acknowledgments.} C.S. acknowledges the NCCR SwissMAP and the support of the SNSF through the Eccellenza project PCEFP2\_181153. L.L. has received funding from the European Research Council (ERC) under the European Union’s Horizon 2020 research and innovation program (grant agreement No 865711).


\bibliographystyle{abbrv} 
\bibliography{Vlasov}

\end{document}